\newcommand{\RR}{\mathcal{R}}
\newcommand{\R}{\mathbb{R}}
\newcommand{\TT}{\mathcal{T}}
\newcommand{\NN}{\mathbb{N}}
\newtheorem{theorem}{Theorem}[section]
\newtheorem{lemma}{Lemma}[section]
\newtheorem{corollary}{Corollary}[section]
\begin{document}

\title{On a three-dimensional Compton scattering tomography system with fixed source}

\author{J Cebeiro$^1$, C Tarpau$^{2,3,4}$, M A Morvidone$^1$, D Rubio$^1$ and M K Nguyen$^2$}

\address{$^1$ Centro de Matem\'atica Aplicada, Universidad Nacional de San Mart\'in (UNSAM), Buenos Aires, Argentina.}

\address{$^2$ Equipes Traitement de l'Information et Syst\`emes, CY Cergy Paris University, ENSEA, CNRS UMR 8051, Cergy-Pontoise, France.}

\address{$^3$ Laboratoire de Physique Th\'eorique et Mod\'elisation, CY Cergy Paris University, CNRS UMR 8089, Cergy-Pontoise, France.}

\address{$^4$ Laboratoire de Math\'ematiques de Versailles, Universit\'e de Versailles Saint-Quentin, CNRS UMR 8100, Versailles, France.}

\ead{jcebeiro@unsam.edu.ar}
\vspace{10pt}

\begin{abstract}  

Compton scattering tomography is an emerging scanning technique with attractive applications in several fields such as non-destructive testing and medical imaging. In this paper, we study a modality in three dimensions that employs a fixed source and a single detector moving on a spherical surface. We also study the Radon transform modeling the data that consists of integrals on toric surfaces. Using spherical harmonics we arrive to a generalized Abel's type equation connecting the coefficients of the expansion of the data with those of the function. We show the uniqueness of its solution and so the invertibility of the toric Radon transform. We illustrate this through numerical reconstructions in three dimensions using a regularized approach. A preliminary version of the algorithm for discrete spherical harmonic expansion is available in the code repository \cite{Cebeiro_Algorithm_for_Discrete_2020}.
\end{abstract}

%
%
%
%
%

\section{Introduction}

Compton scattering imaging takes advantage of scattered radiation in order to explore the interior of an object under study. The basis of Compton scattering imaging is Compton effect where a photon of energy $E_0$ interacts with an electron undergoing a change in its direction and losing part of its of energy. The angular deviation $\omega$ experienced by the photon and its remaining energy $E(\omega)$ are related by the well-known Compton formula

\begin{equation*}
\label{eq:compton}
E(\omega)=\frac{E_0}{1+ \frac{E_0}{mc^2}(1-\cos \omega)},
\end{equation*}
\noindent
where $E_0$ is the energy of the incident photon and $mc^2$ is the energy of the electron at rest (511 keV). Transmission Compton scattering tomography consists in illuminating the object under study with a source of radiation and registering the scattered photons in its neighborhood. In this case the function to recover is a map of the electronic density of the object \cite{NS1994}. The interest in Compton scattering imaging is supported by its numerous potential applications namely non-destructive testing \cite{CNMN17}, airport security \cite{Webber_2020} and the study of composite materials, for instance those used in aircraft industry \cite{tarpau2019ndt}. Recent works in medical imaging suggest potential advantages in diagnosis since images may exhibit better contrast in certain scenarios such as lung tumors \cite{jones2018characterization, redler2018compton}. 

The first formulation of Compton tomography based on a Radon type transform has been introduced by Norton in \cite{NS1994}. The mathematical foundations arising from the seminal works of Allan M. Cormack on circular Radon transforms provide a formal framework to study these modalities \cite{Cormack81,Cormack82,Cormack84}. Depending on the setup, forward models are based on integral transforms on different manifolds. In a two dimensional configuration, where collimated sources are used to restrict photons to a given scanning plane, manifolds are circular arcs \cite{NS1994,CNMN17,TN2011,TN2011a,INTECH,webber_2016,Norton2019,TCMN2019}. If uncollimated detectors are used, manifolds consist in toric sections \cite{Webber_2020,webber_quinto_2019microlocal,TCNRM2020}. 

In three-dimensional Compton scattering tomography, where uncollimated source and detectors are employed, photons recorded at a given place and energy carry information about the electronic density all over a toric surface. In these setups scanning is performed in a volume rather than in a plane and the stacking of planar slices after reconstruction is no longer necessary. The problem in three dimensions has recently been addressed in \cite{RH2018} where a comprehensive framework was introduced together with four three-dimensional modalities. This framework takes attenuation into account and provides a back-projection algorithm for contour reconstruction. For some of these configurations, there are also results on injectivity, uniqueness and numerical reconstructions of the electronic density \cite{WL2018,Webber_2020}. 

The design of an operating Compton scattering tomography system raises numerous challenges involving size, detectors, sources, shielding and precise mechanisms to move components, sometimes in a synchronous fashion. In this respect, the suppression of the need of synchronized movements, the ability to scan a large object without surrounding it and the reduction of the number of moving components are advantageous design features. For instance, employing one fixed source turns valuable, particularly when using electrically powered sources that are expected to be more difficult to handle because of its weight and wiring. 

Here, we study a Compton scattering tomography in three dimensions with one fixed central source and unique detector moving on a sphere. In particular, we show the invertibility of the toric transform modeling data acquisition. Instead of forcing the object under study to be strictly inside the detection sphere \cite{RH2018}, we placed it strictly outside. Through this relative positioning of the specimen, the scanning can be carried out using a compact system without the need of enclosing or surrounding the object. This makes the configuration suitable for scanning large bodies or objects attached to major structures. Recently, a two-dimensional reduction of this model has been studied in \cite{TCNRM2020}. This reduction enabled to solve the reconstruction problem through an analytic reconstruction formula. Nevertheless, the setup requires translational relative movement between the object and the source in order to scan planar sections to be stacked in a three-dimensional reconstruction. There are 3D CST modalities for which object reconstruction has been shown feasible by means of the invertibility of its Radon transform. Two configurations, both registering back-scattered photons \cite{WL2018, Webber_2020}, are suitable for scanning an object without surrounding it. While \cite{WL2018} requires the rotation of the pair source-detector, the design in \cite{Webber_2020} requires translational movement of the source that can be alternatively accomplished employing an array of multiple sources. Regarding the performance in terms of photon counts, the efficiency of the configuration studied here may be comparable to these setups since all of them employ the same type of photons (back-scattered). There is also a setup in \cite{WL2018} that employs forward-scattered photons to scan a body inside the sphere enclosed by the source-detector pair. In this case, comparative efficiency depends upon the range of energies considered.


In order to model the forward problem, we formulate a Radon transform on tori and study its properties, particularly those connected to the uniqueness of its solution. The paper is organized as follows. In section \ref{sec:model} we explain briefly the setup, the scanning protocol and the toric Radon transform used to model image formation. We also write the transform as a spherical harmonics expansion and obtain an  explicit Abel's type equation relating the coefficients of the expansion of the data with those of the function that represents the electronic density of the sought function. In section \ref{sec:uniqueness}, we analyze its kernel and show the uniqueness of its solution proving, thus, the invertibility of the toric Radon transform. Numerical reconstructions based on discrete spherical harmonics and Tikhonov regularization are shown in section \ref{sec:NS}. We discuss the results and the specific advantages and limitations of the modality as well as some new perspectives of our work in section \ref{sec:discusion}. Finally, we close the paper with some conclusions. Details of the spherical harmonics expansion and the discrete algorithm we used to implement it are given in \ref{sec:SH_reduction} and \ref{sec:DSHE}.


\section{A setup for three dimensional Compton scattering tomography}

\label{sec:model}

We introduce a setup for Compton scattering tomography based on a central source of radiation $S$ located at the origin of coordinates. A single detector $D$ moves at a constant distance $R$ of the origin describing a sphere, as shown in figure \ref{fig:setup}. The property to recover is the electronic density $f(x,y,z)$ of an object placed strictly outside of this spherical surface. Detector $D$ registers backscattered photons of energy $E(\omega)$ that have interacted with electrons of the object and deviated an angle $\omega$ from its original path. When detector $D$ is at position labeled by angles $(\alpha,\beta)$ and registers a photon of energy $E(\omega)$, the interaction site is located somewhere on the surface of an apple torus through points $S$ and $D$, which is characterized by $\alpha,\,\beta$ and $\omega$. Thus, the flux of photons of energy $E(\omega)$ registered by $D$ at sites $(\alpha,\beta)$ is proportional to the integral of function $f$ on a toric surface $\mathcal{T}^{\omega,\alpha,\beta}$, with $\alpha \in [0,2\pi)$, $\beta \in [0,\pi]$ and $\omega\in(\pi/2,\pi)$, see figures \ref{fig:setup} and \ref{fig:toric_sections}. Notice that the referred torus may be generated by the rotation of a toric section around segment $\overline{SD}$, see figure \ref{fig:toric_sections}.

\begin{figure}
\begin{center}
a\includegraphics[width=0.4\textwidth]{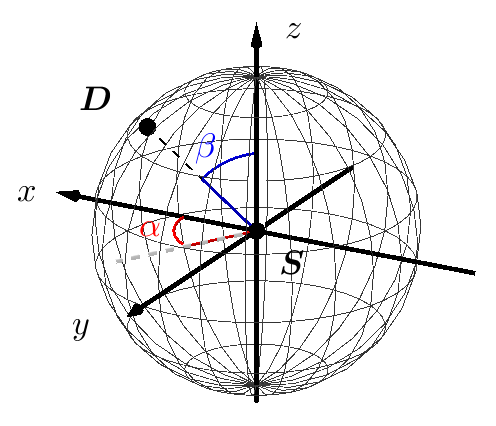}b\includegraphics[width=0.32\textwidth]{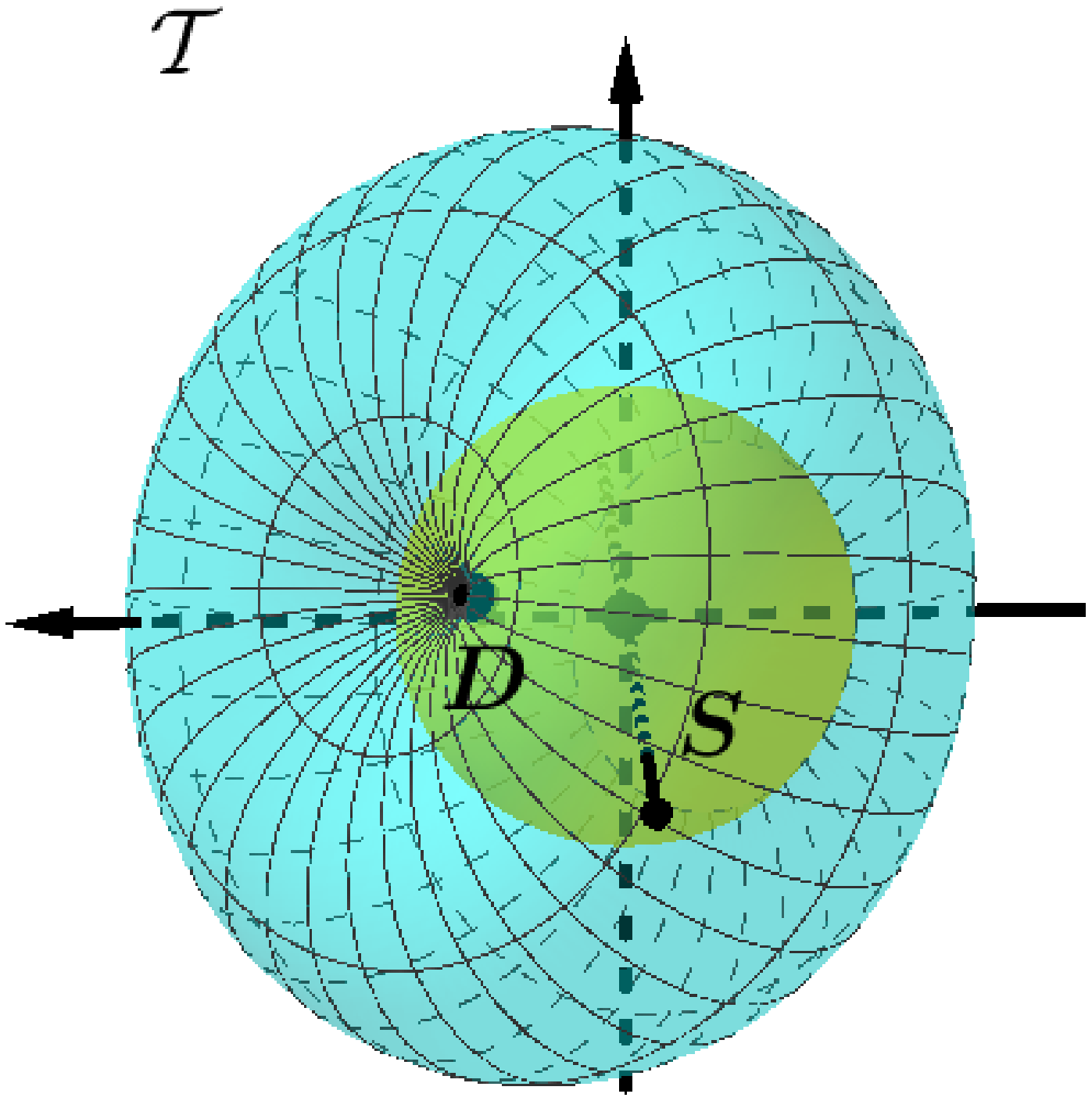}c\includegraphics[width=0.32\textwidth]{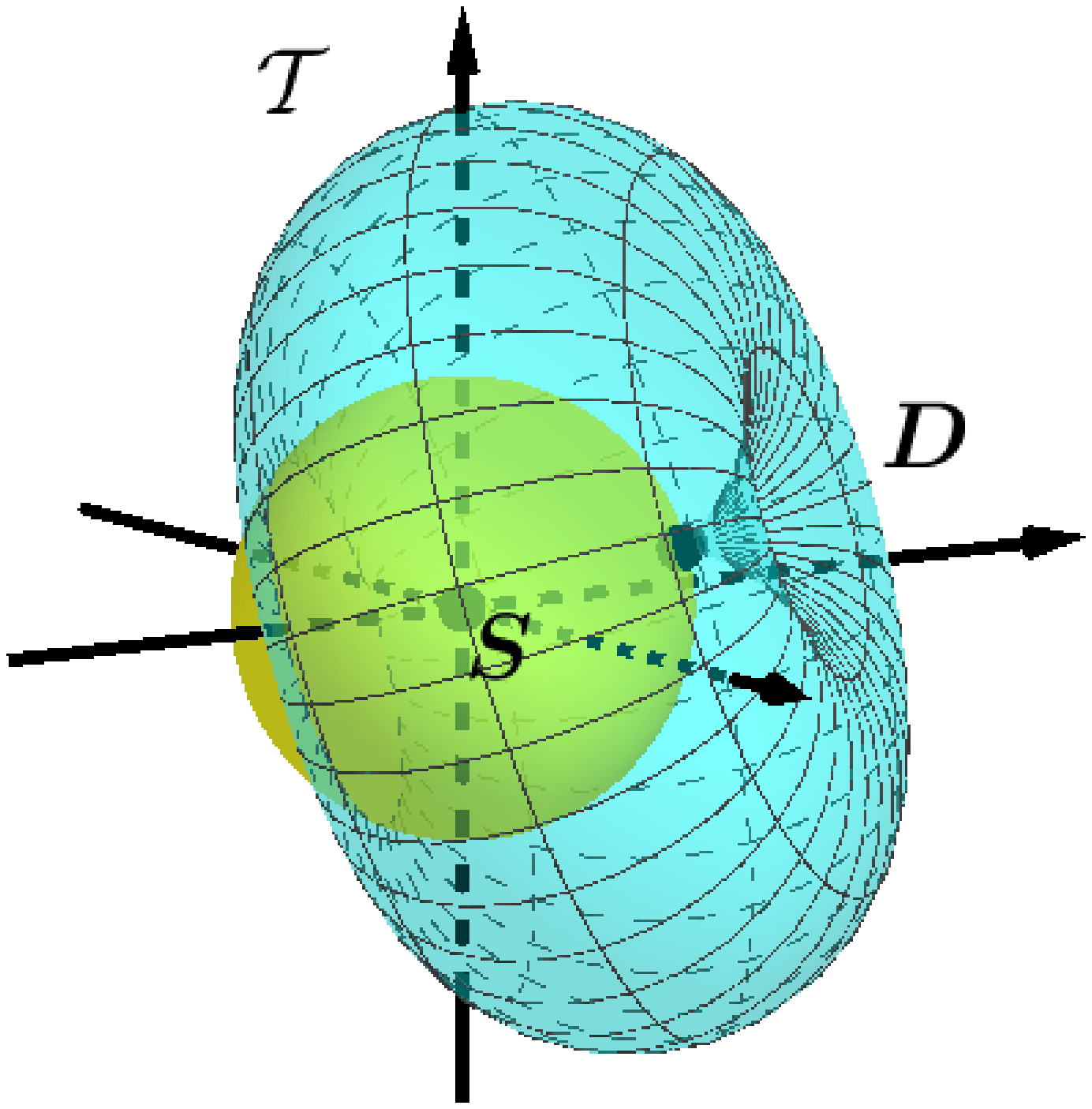}
 \caption{a: setup of the new CST modality: source ($S$), detector ($D$), detection sphere (mesh). b and c: two views of an apple torus $\mathcal{T}$ (blue) and the detection sphere (yellow).} \label{fig:setup}
\end{center}
\end{figure}



\begin{figure}[!h]
\begin{center}
\includegraphics[width= 0.5\textwidth]{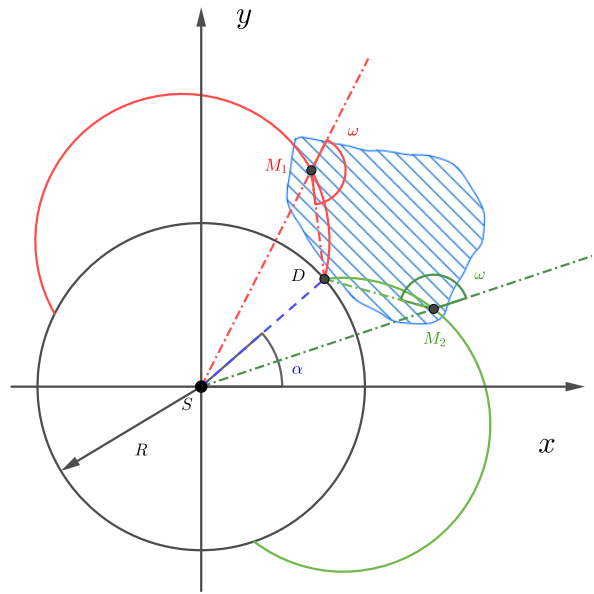}
\caption{Planar section ($z=0$) of the setup and the manifold. Toric section (continous curves red and green) rotates around $\overline{SD}$ to generate the torus. $S$: source, $D$: detector, $\omega$: scattering angle, $M_1$ and $M_2$: scattering sites, $R$: radius of the detection sphere. In this case, the detector is placed at angles $\alpha<\pi/2$ and $\beta=\pi/2$.} \label{fig:toric_sections}
\end{center}
\end{figure}

Before giving the definition of the Radon transform modeling the forward problem, let us introduce some notation. The coordinates of the detector $D$ are given by

\begin{equation*}
D(\alpha,\beta)=R ( \begin{array}{c}\cos\alpha\sin\beta ,
\sin\alpha\sin\beta ,
\cos \beta \end{array} )^T,\quad \alpha \in [0,\,2\pi), \:\beta \in [0,\pi].
\end{equation*}

\noindent
We define a parametrization of any torus $\mathcal{T}^{\omega,\alpha,\beta}$ as:

\begin{equation}\label{eq:phi_omega}
\Phi^{\omega,\alpha,\beta}(\gamma,\psi)= r^\omega(\gamma) \Theta^{\alpha,\beta}(\gamma,\psi),
\end{equation}

\noindent 
with $\gamma \in(0,2\omega - \pi)$, $\psi\in(0,2\pi)$. Here, the radial part is given by

\begin{equation*}
r^\omega(\gamma)=R \frac{\sin ( \omega-\gamma)}{\sin\omega}
\end{equation*}

\noindent
while the angular part is expressed as  $\Theta^{\alpha,\beta}(\gamma,\psi)=u(\alpha)a(\beta)\Theta(\gamma,\psi),$ where
\begin{equation*}
\Theta(\gamma,\psi)= ( \begin{array}{c}\cos\psi\sin\gamma ,
\sin\psi\sin\gamma ,
\cos \gamma \end{array})^T
\end{equation*}
is a point on $S^2$, the unit sphere in $\R^3$, and

\begin{minipage}{0.4\linewidth}
\begin{equation*}\label{eq:u_alpha}
\fl u(\alpha)=\left( \begin{array}{ccc} \cos\alpha & -\sin\alpha & 0 \\
\sin\alpha & \,\,\,\cos\alpha & 0 \\
0 & 0 & 1 \end{array}\right)\,\, \textrm{and}
\end{equation*}
\end{minipage}
\hspace{0.2cm} \begin{minipage}{0.5\linewidth} 
\begin{equation}\label{eq:u_beta}
\fl a(\beta)=\left( \begin{array}{ccc} \cos\beta &  0 &\sin\beta \\
0 & 1 & 0 \\
-\sin\beta & 0 & \cos\beta \\
 \end{array}\right)
\end{equation} 
\end{minipage}
are a rotation of angle $\alpha$ about the $z$-axis and a rotation of angle $\beta$ about the $y$-axis, respectively.
Finally, given positive numbers $r_M,\,r_m$ such that $r_{M}>r_{ m}>R>0$, we define the spherical shell $S_h(r_m,r_M)$ as
$$S_h(r_m,r_M)=\{(x,y,z)\in\R^3:r_m\leq\sqrt{x^2+y^2+z^2}\leq r_M\}.$$

We are now ready to introduce the toric Radon transform associated to this modality.

\bigskip

\noindent{\bf Definition.}
Let $f(x,y,z)$ be a compact supported function with support contained in $S_h(r_m,r_M)$. We define the toric Radon transform $\mathcal{R}_\mathcal{T}f$ of function $f$ as $\mathcal{R}_\mathcal{T}f(\alpha,\beta,\omega)=
\int_\mathcal{T^{\omega,\alpha,\beta}}dS_\mathcal{T} \ f(x,y,z)$. Explicitly

\begin{equation}
\label{eq:TRT_omega}
\mathcal{R}_\mathcal{T}f(\alpha,\beta,\omega)=
\int_0^{2\omega-\pi} d\gamma   \int_0^{2\pi} d\psi \ f \left(  \Phi^{\omega,\alpha,\beta}(\gamma,\psi) \right)  \,r^\omega(\gamma) \frac{\sin \gamma}{\sin \omega},    
\end{equation}

\noindent
where $\omega \in (\frac{\pi}{2},\pi)$, $\alpha\in[0,2\pi)$, $\beta \in [0,\pi]$. 

\bigskip
   
\noindent
Equation~(\ref{eq:TRT_omega}) is the forward operator that models the data recorded, this integral transform is rotational invariant. Its manifold is the part of an apple torus through the origin outside of the sphere of radius $R$. In \cite{WL2018}, the authors studied another rotational invariant toric transform whose manifold was an apple torus that did not pass through the origin. Interesting issues on the toric Radon transform \eref{eq:TRT_omega} such as spherical harmonics expansion \cite{DH1994}, rotational invariance and invertibility \cite{QUINTO1983510} can be studied, in what follows we address some of them. 
%
%
%
%

\subsection{Spherical harmonics expansion}
\label{sec:SH_basics}

In this section, we explicit the connection between the components of the spherical harmonics expansion of a function $f$ and those of its toric Radon transform $\RR_\TT f$. Spherical harmonics of degree $l$ and order $m$ are defined as:

\begin{equation}\label{eq:SH_def}
Y_l^m(\gamma,\psi)=(-1)^m\sqrt{\frac{(2l+1)(l-m)!}{4\pi(l+m)!}}P^m_l(\cos\gamma)e^{im\psi},
\end{equation}

\noindent
where $\gamma \in [0,\pi]$, $\psi\in[0,2\pi)$ and $P^m_l(x)$ is the Legendre polynomial of degree $l$ and order $m$, see \cite{BL81,DH1994} for details. The set $\{Y_l^m\}$, for $l\in\NN$ and $|m|\leq l$ is a complete orthonormal system in $S^2.$ Any function $f \in C^\infty(\R^3)$ can be expanded in terms of $Y_l^m(\gamma,\psi)$ according to

\begin{equation}\label{eq:SH_F}
f(r\Theta(\gamma,\psi))=\sum_{l=0}^\infty\sum_{|m|\leq l}f_{lm}(r)Y_l^m(\gamma,\psi),
\end{equation}

\noindent
where  
$$f_{lm}(r)=\langle f, Y_l^m \rangle=\int_0^{2\pi}\int_0^\pi f(r\Theta(\gamma,\psi))\overline{Y_l^m(\gamma,\psi)}\sin\gamma \,d\gamma d\psi,$$ and $\langle,\rangle$ is the scalar product in $L^2(S^2)$, the overline denotes complex conjugation, and $r\in \R^+_0$ is fixed.

Following the ideas in \cite{WL2018}, it can be shown that the spherical harmonics expansion of $\RR_\TT f$ can be written as

\begin{equation}\label{eq:TRT_SH_0}
\RR_\TT f(\alpha,\beta,\omega)=\sum_{l\in\NN}\sum_{|m|\leq l} (\RR_\TT f)_{lm}(\omega) Y_l^m(\alpha,\beta),
\end{equation}

\noindent
where coefficients in the expansion are given by the following lemma, see \ref{sec:SH_reduction} for a proof.

\begin{lemma} \label{theo:SHE}
The Fourier coefficients of data $(\RR_\TT f)_{lm}$ in \eref{eq:TRT_SH_0} and those of the object $f_{lm}$ are related by

\begin{equation}\label{eq:TRT_SH_RT_lm}
(\RR_\TT f)_{lm}(\omega)=2\pi \int_0^{2\omega-\pi} d\gamma \, r^\omega(\gamma) \frac{\sin \gamma}{\sin \omega} \, f_{lm}  \left(  r^\omega(\gamma) \right) P^{0}_l(\cos\gamma),
\end{equation}
where $P^{0}_l(.)$ is the zero order associated Legendre polynomial of degree $l$.
\end{lemma}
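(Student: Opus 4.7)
The plan is to insert the spherical harmonic expansion of $f$ directly into the definition \eref{eq:TRT_omega} of $\RR_\TT f$, swap the order of summation and integration, and then evaluate the inner $\psi$-integral via a Funk--Hecke-type identity. The key geometric observation is that, for fixed $\gamma$, the map $\psi\mapsto \Phi^{\omega,\alpha,\beta}(\gamma,\psi)/r^\omega(\gamma) = u(\alpha)a(\beta)\Theta(\gamma,\psi)$ traces a circle on the unit sphere at fixed angular distance $\gamma$ from the unit vector $\hat\eta(\alpha,\beta)=u(\alpha)a(\beta)e_3 = (\cos\alpha\sin\beta,\sin\alpha\sin\beta,\cos\beta)^T$, which is precisely the direction of the detector $D(\alpha,\beta)/R$. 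So the $\psi$-integral is $2\pi$ times a spherical mean of $Y_l^m$ over the circle of co-latitude $\gamma$ around $\hat\eta$.

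The second step is the Funk--Hecke identity: for every $l\in\NN$ and $|m|\le l$,
\begin{equation*}
\frac{1}{2\pi}\int_0^{2\pi} Y_l^m\!\bigl(u(\alpha)a(\beta)\Theta(\gamma,\psi)\bigr)\,d\psi \;=\; P_l(\cos\gamma)\,Y_l^m(\alpha,\beta).
\end{equation*}
I would justify this in two short sub-steps. First, the spherical-mean operator $\mathcal{M}_\gamma$ that averages functions on $S^2$ over the small circle of co-latitude $\gamma$ around a point commutes with rotations, so by Schur's lemma it acts on the irreducible SO(3)-module spanned by $\{Y_l^m\}_{|m|\le l}$ as a scalar $c(l,\gamma)$. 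Second, to identify that scalar, apply $\mathcal{M}_\gamma$ to the zonal harmonic $Y_l^0$ at $e_3$: the average there is independent of $\psi$ and equals $\sqrt{(2l+1)/4\pi}\,P_l(\cos\gamma)$, while $Y_l^0(e_3)=\sqrt{(2l+1)/4\pi}$, giving $c(l,\gamma)=P_l(\cos\gamma)=P_l^0(\cos\gamma)$.

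Combining, inserting the expansion \eref{eq:SH_F} with radius $r=r^\omega(\gamma)$ into \eref{eq:TRT_omega} yields
\begin{equation*}
\RR_\TT f(\alpha,\beta,\omega)=\sum_{l,m}Y_l^m(\alpha,\beta)\;\Bigl[2\pi\!\int_0^{2\omega-\pi}\! r^\omega(\gamma)\frac{\sin\gamma}{\sin\omega}\,f_{lm}(r^\omega(\gamma))\,P_l^0(\cos\gamma)\,d\gamma\Bigr],
\end{equation*}
and comparing with \eref{eq:TRT_SH_0} reads off the claimed formula \eref{eq:TRT_SH_RT_lm}. The only non-routine step is the Funk--Hecke identity, which I expect to be the main obstacle in a careful write-up: one must fix the convention of $Y_l^m(\alpha,\beta)$ (the paper uses the first argument as azimuthal and the second as polar, consistent with $D(\alpha,\beta)=R\hat\eta(\alpha,\beta)$) and verify that the rotations $u(\alpha),\,a(\beta)$ defined in \eref{eq:u_beta} do send $e_3$ to $\hat\eta(\alpha,\beta)$. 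The interchange of sum and integral is justified by the compact support of $f$ in $S_h(r_m,r_M)$ and the smoothness of $f$, which guarantee uniform convergence of the SH series on compact radial shells.
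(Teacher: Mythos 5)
Your proposal is correct and follows essentially the same route as the paper's proof in \ref{sec:SH_reduction}: both insert the spherical harmonic expansion of $f$ into \eref{eq:TRT_omega} and reduce the $\psi$-integral to the identity $\frac{1}{2\pi}\int_0^{2\pi}Y_l^m\bigl(u(\alpha)a(\beta)\Theta(\gamma,\psi)\bigr)\,d\psi=P_l^0(\cos\gamma)\,Y_l^m(\alpha,\beta)$. The only divergence is how that identity is justified --- you use Schur's lemma on the irreducible $SO(3)$-module plus evaluation on the zonal harmonic, whereas the paper expands the rotated harmonic in Wigner $D$-matrices, keeps the $n=0$ term surviving the $\psi$-integration, and applies the properties $D^{(l)}_{0,m}(h)=\overline{D^{(l)}_{m,0}}(h^{-1})$ and $Y_l^m(\alpha,\beta)=\sqrt{(2l+1)/4\pi}\,\overline{D^{(l)}_{m,0}}(h^{-1})$ --- so the two arguments are interchangeable.
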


%
%

\section{Uniqueness of the solution of the toric Radon transform}
\label{sec:uniqueness}

We address now the question of the uniqueness of solutions of \eref{eq:TRT_omega} using spherical harmonics expansions. In section \ref{sec:alternative_kernel} we show that \eref{eq:TRT_SH_RT_lm} is a generalized Abel type integral equation with a kernel with zeros on its diagonal. In section \ref{sec:invertibility}  we analyze the properties of the kernel to prove invertibility. 

\subsection{An alternative expression for the integral equation}
\label{sec:alternative_kernel}

We split up \eref{eq:TRT_SH_RT_lm} in two parts having integration  range $(0, {\omega-\frac{\pi}{2}})$  and $({\omega-\frac{\pi}{2}},2\omega-\pi)$, respectively and perform the substitution $\gamma =\omega-\sin^{-1}\left( \frac{r\sin\omega}{R}\right) $ in the first integral and $\gamma =\omega+\sin^{-1}\left( \frac{r\sin\omega}{R}\right) -\pi$ in the second one. After some calculations, making use of the identities $\cos(\sin^{-1}x)=\sqrt{1-x^2}$ and $P_l^0(-x)=(-1)^l P_l^0(x)$ (see, for example, \cite{Laden}), we arrive to

\begin{eqnarray}\label{eq:TRT_SH_to_kernel_3}
\fl (\RR_\TT f)_{lm}(\omega)=
\int_R^{\frac{R}{\sin\omega}} dr \, f_{lm} \left(r \right) \frac{1}{\sqrt{(\frac{R}{\sin \omega})-r}} \times \nonumber\\
\fl \frac{2\pi}{\sin\omega}  {  \sum_{\sigma=\pm 1}} \frac{ r \sin \left( \sin^{-1}\left( \frac{r\sin\omega}{R}\right)-\sigma  \omega \right)}{\sqrt{(\frac{R}{\sin \omega})+r}} (\sigma)^l P^0_l\left( \cos \left(  \omega -\sigma  \sin^{-1}\left( \frac{r\sin\omega}{R}\right)  \right)\right).  \nonumber\\
\quad
\end{eqnarray}

\noindent
The support of function $f_{lm}(r)$ enables to replace the lower integration limit $R$ by $r_{m}$. Making the substitution $p=R/\sin\omega$\footnote{Physically $p$ is the diameter of the circles that generate the torus as a surface of revolution.} and keeping the notation for readability\footnote{Strictly speaking, we should have changed the function after this substitution, for instance $(\RR_\TT f)_{lm}(\sin^{-1}R/p)= \widetilde{(\RR_\TT} f)_{lm}(p)$.}, the integral equation reads:
 
\begin{eqnarray}
\label{eq:TRT_SH_to_kernel_5}
{(\RR_\TT f)_{lm}}(p) =\int_{r_{m}}^{p} dr \, f_{lm} \left(r \right) \frac{1}{\sqrt{p-r}} K_l(p,r),
\end{eqnarray}

\noindent
where the kernel is 

\begin{eqnarray}\label{eq:kernel_1}
\fl
K_l(p,r)=\frac{2\pi}{R}  {  \sum_{\sigma=\pm 1}} \sigma^l  \frac{ p\,r \sin \left( \sin^{-1}\left(\frac{r}{p}\right)-\sigma  \sin^{-1} \left( \frac{R}{p} \right)  \right)}{\sqrt{p+r}} P^0_l\left( \cos \left(   \sin^{-1} \left( \frac{R}{p}\right)  -\sigma  \sin^{-1}\left(\frac{r}{p}\right)   \right)\right). \nonumber\\
\end{eqnarray}
%

\noindent
Now, we rewrite kernel \eref{eq:kernel_1} in a suitable form for the evaluation of the existence of a unique solution of \eref{eq:TRT_SH_RT_lm}. Using elementary geometry, we can express the kernel as

\begin{eqnarray}\label{eq:kernel_3}
\fl
K_l(p,r)=  {  \sum_{\sigma=\pm 1}} \left( Q_1 -\sigma Q_2 \sqrt{p-r} \right) \sigma^l P^0_l\left(  Q_3 \sqrt{p-r} + \sigma Q_4 \right),
\nonumber\\
\quad
\end{eqnarray}

\noindent
where 
 \begin{eqnarray}
Q_1=Q_1(p,r)= \frac{2\pi r^2\sqrt{p^2-R^2}  }{R p \sqrt{p+r}} \\ 
Q_2=Q_2(p,r)= \frac{ 2 \pi r   }{p}\\
Q_3=Q_3(p,r)= \frac{\sqrt{p^2-R^2}\sqrt{p+r} }{p^2} \\
Q_4=Q_4(p,r)=  \frac{ R r}{p^2}.
\end{eqnarray}

\noindent
Using the binomial formula to expand each term of $P^0_l(x)= a_l x^l+...+a_1x+a_0,$ with $x= Q_3 \sqrt{p-r} + \sigma Q_4$, we get 

\begin{eqnarray}\label{eq:kernel_4}
\fl
K_l(p,r)=   {  \sum_{n=0}^l} a_n {  \sum_{k=0}^n} {{n}\choose{k}} Q_3^{n-k}Q_4^k \left[\left( (-1)^{n-k} +1 \right) Q_1 \sqrt{p-r}^{n-k} + \left( (-1)^{n-k} -1 \right)Q_2 \sqrt{p-r}^{n-k+1} \right].
\nonumber\\
\quad
\end{eqnarray}

\noindent
We perform some calculations
\begin{eqnarray*}\label{eq:kernel_3_bis}
\fl
K_l(p,r) = {  \sum_{n=0}^3} a_n {  \sum_{k=0}^n} {{n}\choose{k}} Q_3^{n-k}Q_4^k \left[\left( (-1)^{n-k} +1 \right) Q_1 \sqrt{p-r}^{n-k} + \left( (-1)^{n-k} -1 \right) Q_2 \sqrt{p-r}^{n-k+1} \right]\cr
 \fl\phantom{doremi}+  \sum_{n=4}^l a_n \left\{ {  \sum_{k=0}^{n-4}} {{n}\choose{k}} Q_3^{n-k}Q_4^k \left[\left( (-1)^{n-k} +1 \right) Q_1 \sqrt{p-r}^{n-k} + \left( (-1)^{n-k} -1 \right) Q_2 \sqrt{p-r}^{n-k+1} \right] \right. \cr
\fl\phantom{doremi} \left. +{  \sum_{k=n-3}^{n}} {{n}\choose{k}} Q_3^{n-k}Q_4^k \left[\left( (-1)^{n-k} +1 \right) Q_1 \sqrt{p-r}^{n-k} + \left( (-1)^{n-k} -1 \right) Q_2 \sqrt{p-r}^{n-k+1} \right]\right\} \cr
\fl\phantom{doremi}= \sum_{n=4}^l a_n  {  \sum_{k=0}^{n-4}} {{n}\choose{k}} Q_3^{n-k}Q_4^k \left[\left( (-1)^{n-k} +1 \right) Q_1 \sqrt{p-r}^{n-k} + \left( (-1)^{n-k} -1 \right) Q_2 \sqrt{p-r}^{n-k+1} \right]\cr
\fl\phantom{doremi} +(p-r)^2\left[-2a_3{{3}\choose{0}}Q_3^3\,Q_2-2\sum_{n=4}^l a_n {{n}\choose{n-3}}Q_3^3\,Q_4^{n-3}\,Q_2\right]\cr
\fl\phantom{doremi}+2(p-r) \left\{  \left[Q_3^2\,Q_1\sum_{n=4}^l a_n {{n}\choose{n-2}}Q_4^{n-2}-Q_3\,Q_2\sum_{n=4}^l a_n {{n}\choose{n-1}}Q_4^{n-1}\right] \right. \cr
\fl\phantom{doremi}\left. +Q_3^2\,Q_1\left[a_2{{2}\choose{0}}+a_3{{3}\choose{1}}Q_4\right]-Q_3\,Q_2\left[a_1{{1}\choose{0}}+a_2{{2}\choose{1}}Q_4+a_3{{3}\choose{2}}Q_4^2\right] \right\} \cr
\fl\phantom{doremi}+2Q_1\left[\sum_{n=4}^l a_n {{n}\choose{n}}Q_4^n+a_3{{3}\choose{3}}Q_4^3+a_2{{2}\choose{2}}Q_4^2+a_1{{1}\choose{1}}Q_4+a_0{{0}\choose{0}}\right],\cr
\
\end{eqnarray*}
and finally we arrive to

\begin{eqnarray}\label{eq:kernel_5}
\fl
K_l(p,r)= 2 Q_1   {\sum_{n=0}^l} a_n  Q_4^n + 2(p-r)\left[ \frac{1}{2} Q_3^2Q_1  {\sum_{n=2}^l} a_n n(n-1) Q_4^{n-2}  -Q_3Q_2  {\sum_{n=1}^l} a_n n \, Q_4^{n-1}   \right]\cr
\fl + {  \sum_{n=3}^l} a_n {  \sum_{k=0}^{n-3}} {{n}\choose{k}} Q_3^{n-k}Q_4^k \left[\left( (-1)^{n-k} +1 \right) Q_1 \sqrt{p-r}^{n-k} + \left( (-1)^{n-k} -1 \right) Q_2 \sqrt{p-r}^{n-k+1} \right].\cr
\nonumber\\
\quad
\end{eqnarray}

\noindent
Notice that summations on the first line correspond to Legendre polynomials and their first and second order derivatives evaluated in $Q_4$. Also, terms with odd powers of $\sqrt{p-r}$ vanish and only powers of $(p-r)$ greater than or equal to 2 remain.  Then, using a compact notation for Legendre polynomials $P(.)=P^0_l(.)$ and its derivatives $P'(.)$ and $P''(.)$, and splitting up summations in order to define a new subindex, we have

\begin{eqnarray}\label{eq:kernel_7}
\fl K_l(p,r)=2 Q_1   P(Q_4) + 2(p-r)\left[ \case{1}{2} Q_3^2Q_1  P''(Q_4)  -Q_3Q_2  P'(Q_4) \right]\cr
\fl + 2 {  \sum_{n=3}^l} a_n \left[  Q_1 {  \sum_{i=2}^{\lfloor \case{n}{2} \rfloor}} {{n}\choose{n-2i}} Q_3^{2i}Q_4^{n-2i} (p-r)^i -Q_2 {  \sum_{i=2}^{\lfloor \case{n+1}{2} \rfloor}} {{n}\choose{n-2i+1}} Q_3^{2i-1}Q_4^{n-2i+1} (p-r)^i \right] .\cr\nonumber\\
\end{eqnarray}

\noindent 
Equation \eref{eq:kernel_7} is useful for studying the properties of the kernel in order to prove invertibility. Particularly, \eref{eq:kernel_7} is more suitable for differentiation on the diagonal $p=r$ according Theorem \ref{theo:theo_Haltmeier} in next section.

\subsection{On the invertibility of $\RR_\TT f$ in terms of spherical harmonics}
\label{sec:invertibility}

Equation \eref{eq:TRT_SH_to_kernel_5} is a generalized Abel type equation. Notice that kernel \eref{eq:kernel_7} has zeros on the diagonal $p=r$, since the Legendre polynomials have zeros in the interval $[0, 1]$. Recently, Schiefeneder and Haltmeier stated conditions for the uniqueness of the solution of this kind of equations with kernels with zeros on its diagonal \cite{DSMH2017}. For the sake of completeness, we present their results in the following lemma.



\begin{lemma}[\cite{DSMH2017}, Theorem 3.4]\label{theo:theo_Haltmeier}
Consider the generalized Abel type integral equation 

\begin{equation}\label{eq:abel}
\forall t\in[a,b]:g(t)=\int_a^tds \,f(s) \frac{1}{\sqrt{t-s}}K(t,s)
\end{equation}
\noindent
where $g \in C([a,b])$  and $K \in C(\bigtriangleup(a,b))$, with $\bigtriangleup(a,b):=\lbrace a \leq s \leq t \leq b \rbrace$, is a continuous kernel having zeros on the diagonal. Suppose $K:\bigtriangleup(a,b)\rightarrow \R$, where $a<b$, satisfies the following assumptions:

\begin{enumerate} 
\item $K\in C^3(\bigtriangleup(a,b)).$
\item $N_K:=\lbrace s\in[a,b)|K(s,s)=0\rbrace$ is finite and consists of simple roots.
\item For every $s\in N_K$, the gradient $(\kappa_1,\kappa_2)=\nabla K(s,s)$ satisfies
$$1+\frac{1}{2}\frac{\kappa_1}{\kappa_1+\kappa_2}>0.$$
\end{enumerate}
Then, for any $g\in C([a,b])$, equation \eref{eq:abel} has at most one solution $f\in C([a,b])$.
\end{lemma}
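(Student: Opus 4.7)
The plan is to prove uniqueness by a continuation argument: by linearity of \eref{eq:abel} it suffices to show that $g \equiv 0$ forces $f \equiv 0$. I would set $t^* = \sup\{t \in [a,b] : f \equiv 0 \textrm{ on } [a,t]\}$ and show $t^* = b$ by ruling out the two possibilities that $t^*$ is strictly less than $b$, according to whether $t^* \in N_K$ or not.

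At any point $t_0 \in [a,b) \setminus N_K$, continuity keeps $K(\cdot,\cdot)$ away from zero along the diagonal on a right-neighbourhood of $t_0$. There I would apply the standard half-order Abel inversion: multiply \eref{eq:abel} by $(\tau - t)^{-1/2}$ and integrate $t$ from $a$ to $\tau$, then swap order by Fubini to obtain
\[
\int_a^\tau \frac{g(t)}{\sqrt{\tau-t}}\, dt = \int_a^\tau f(s)\, \tilde K(\tau,s)\, ds, \qquad \tilde K(\tau,s) = \int_s^\tau \frac{K(t,s)}{\sqrt{(\tau-t)(t-s)}}\, dt.
\]
The substitution $t = s + u(\tau-s)$ rewrites $\tilde K(\tau,s) = \int_0^1 K(s+u(\tau-s),s)[u(1-u)]^{-1/2}\, du$, from which $\tilde K(\tau,\tau) = \pi K(\tau,\tau) \neq 0$ on this neighbourhood. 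Differentiating in $\tau$ and exploiting the $C^3$ regularity of $K$ produces a Volterra equation of the second kind with nonvanishing diagonal and a bounded kernel, whose only continuous solution is zero by a classical Gronwall estimate. This rules out $t^* \in [a,b) \setminus N_K$.

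If instead $t^* \in N_K$, the previous argument fails because $\tilde K(t^*,t^*) = 0$. Here I would Taylor-expand $K(t,s) = \kappa_1(t-t^*) + \kappa_2(s-t^*) + O((t-t^*)^2 + (s-t^*)^2)$ at the simple root and examine $0 = g(t) = \int_{t^*}^t f(s) K(t,s)(t-s)^{-1/2}\, ds$ for $t > t^*$. The substitution $s = t^* + u(t-t^*)$ exposes the leading behaviour
\[
(t-t^*)^{3/2}\, f(t^*) \int_0^1 (\kappa_1 + \kappa_2 u)(1-u)^{-1/2}\, du + o\bigl((t-t^*)^{3/2}\bigr) = 0.
\]
Evaluating the Beta-type integrals gives the coefficient $2\kappa_1 + \frac{4}{3}\kappa_2 = \frac{4}{3}(\kappa_1+\kappa_2)\bigl[1 + \kappa_1/(2(\kappa_1+\kappa_2))\bigr]$, which condition (iii) forces to be strictly nonzero, yielding $f(t^*) = 0$. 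To extend $f \equiv 0$ strictly past $t^*$, I would apply the half-Abel inversion directly on the right-neighbourhood of $t^*$: because $K(t,s)$ factors to leading order as $(t-t^*)\kappa_1 + (s-t^*)\kappa_2$, the same Beta-integral computation shows the inverted equation has effective diagonal proportional to the nonzero quantity above, producing a regular Volterra equation of the second kind on which Gronwall closes the bootstrap.

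The main obstacle is precisely the treatment of the zeros in $N_K$: where the kernel vanishes on the diagonal, the half-Abel inversion also degenerates, and the quantitative content of condition (iii) must be extracted from the Beta-integral computation $\int_0^1(\kappa_1+\kappa_2 u)(1-u)^{-1/2}\, du = 2\kappa_1 + \frac{4}{3}\kappa_2$ so that the leading coefficient has a fixed sign. Once this is established, the simplicity of the roots together with the finiteness of $N_K$ makes the local-to-global continuation routine.
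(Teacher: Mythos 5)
First, note that the paper does not prove this lemma at all: it is imported verbatim from \cite{DSMH2017} (Theorem 3.4), so the only meaningful comparison is with the argument in that reference, which reduces the problem near a diagonal zero to a singular (cordial/Euler-type) Volterra equation and analyses the spectrum of the limiting operator. Your skeleton --- continuation via $t^*=\sup\{t: f\equiv 0 \text{ on } [a,t]\}$, half-order Abel inversion away from $N_K$, and the scaling $s=t^*+u(t-t^*)$ with Beta integrals at a zero --- is a sensible start, and your computation $\int_0^1(\kappa_1+\kappa_2u)(1-u)^{-1/2}du=2\kappa_1+\tfrac43\kappa_2=\tfrac43(\kappa_1+\kappa_2)\bigl[1+\kappa_1/(2(\kappa_1+\kappa_2))\bigr]$ correctly locates where condition (iii) enters numerically. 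The part away from $N_K$ is fine: there $\tilde K(\tau,\tau)=\pi K(\tau,\tau)\neq 0$ and differentiation gives a genuine second-kind Volterra equation.

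The gap is in the treatment of $t^*\in N_K$, and it is not cosmetic. Your argument extracts only $f(t^*)=0$, and for that you use only that $2\kappa_1+\tfrac43\kappa_2\neq 0$, i.e. a strictly weaker statement than (iii); the strict positivity in (iii) is never actually used, which signals the argument cannot be complete. Indeed, consider the model kernel $K(t,s)=\kappa_1(t-t^*)+\kappa_2(s-t^*)$ with $g\equiv 0$ on $[t^*,b]$: the function $f(s)=(s-t^*)^{\mu}$ solves the homogeneous equation exactly when $\kappa_1 B(\mu+1,\tfrac12)+\kappa_2 B(\mu+2,\tfrac12)=0$, i.e. $\mu=-\bigl(1+\tfrac12\kappa_1/(\kappa_1+\kappa_2)\bigr)$. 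When (iii) fails with $\mu>0$ this is a nontrivial \emph{continuous} homogeneous solution that vanishes at $t^*$ --- so knowing $f(t^*)=0$ does not let you continue. Correspondingly, your claim that the half-Abel-inverted equation near $t^*$ is ``a regular Volterra equation of the second kind on which Gronwall closes the bootstrap'' is false: its diagonal coefficient is $\pi K(\tau,\tau)\sim\pi(\kappa_1+\kappa_2)(\tau-t^*)$, which vanishes at $\tau=t^*$, and after dividing by it one obtains a singular Euler-type equation $0=\pi(\kappa_1+\kappa_2)f(\tau)+\tfrac{\pi\kappa_1}{2}(\tau-t^*)^{-1}\int_{t^*}^{\tau}f(s)\,ds+\cdots$ whose continuous homogeneous solutions are excluded precisely by the strict inequality in (iii). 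Handling this singular equation rigorously (in a weighted space, or via the cordial-Volterra machinery of \cite{DSMH2017}) is the heart of the theorem and is missing from your proposal.
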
 

We are now ready to introduce our claim.
\begin{theorem}[Invertibility of equation \eref{eq:TRT_SH_to_kernel_5}] \label{theo:main}
For any $(\RR_\TT f)_{lm}\in C([r_m,r_M])$, equation \eref{eq:TRT_SH_to_kernel_5} has at most one solution $f_{lm} \in C([r_m,r_M])$.
\end{theorem}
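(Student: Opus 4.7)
The plan is to invoke Lemma \ref{theo:theo_Haltmeier} with $[a,b]=[r_m,r_M]$ and with $K=K_l$ given by \eref{eq:kernel_7}, verifying its three hypotheses in turn. Since $r_m>R>0$, the auxiliary functions $Q_1,Q_2,Q_3,Q_4$ are smooth on $\bigtriangleup(r_m,r_M)$ and \eref{eq:kernel_7} expresses $K_l$ as a polynomial in $(p-r)$ with smooth coefficients in $(p,r)$. Hence $K_l\in C^\infty(\bigtriangleup(r_m,r_M))$, and hypothesis (i) holds comfortably.

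For hypothesis (ii), evaluating \eref{eq:kernel_7} at $p=r=s$ kills every $(p-r)^i$ term with $i\geq 1$ and leaves $K_l(s,s)=2Q_1(s,s)P_l^0(R/s)$. Since $Q_1(s,s)>0$ for all $s>R$, the diagonal zeros of $K_l$ correspond exactly to those $s\in[r_m,r_M)$ for which $R/s$ is a root of $P_l^0$. Because $P_l^0$ has at most $l$ roots in $(-1,1)$ and all are simple, $N_{K_l}$ is finite; differentiating $K_l(s,s)$ in $s$ and observing that $(P_l^0)'$ does not vanish at these roots confirms that each element of $N_{K_l}$ is itself a simple zero of $s\mapsto K_l(s,s)$.

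The heart of the argument is hypothesis (iii). I differentiate \eref{eq:kernel_7} at $p=r=s$; terms $(p-r)^i$ with $i\geq 2$ contribute nothing to first derivatives on the diagonal, so only the zeroth- and first-order parts in $(p-r)$ are relevant. Using $P_l^0(R/s)=0$, $\partial_p Q_4|_{p=r=s}=-2R/s^2$ and $\partial_r Q_4|_{p=r=s}=R/s^2$, a direct computation yields $\kappa_1=-(4R/s^2)Q_1(s,s)(P_l^0)'(R/s)+B(s)$ and $\kappa_2=(2R/s^2)Q_1(s,s)(P_l^0)'(R/s)-B(s)$, with $B(s):=Q_3(s,s)^2 Q_1(s,s)(P_l^0)''(R/s)-2Q_3(s,s)Q_2(s,s)(P_l^0)'(R/s)$ being the on-diagonal value of the bracketed coefficient of $(p-r)$ in \eref{eq:kernel_7}.

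The main technical point---and the only step I expect to require real work---is to prove $B(s)=0$ for every $s\in N_{K_l}$. The idea is to use the Legendre differential equation, which at a root of $P_l^0$ reads $(1-x^2)(P_l^0)''(x)=2x(P_l^0)'(x)$; substituting $x=R/s$ and inserting the explicit on-diagonal values of $Q_1,Q_2,Q_3$ should produce exact cancellation between the two terms in $B(s)$. Once $B(s)=0$, one immediately obtains $\kappa_1+\kappa_2=-(2R/s^2)Q_1(s,s)(P_l^0)'(R/s)\neq 0$ (by simplicity of the zero) and $\kappa_1/(\kappa_1+\kappa_2)=2$, so $1+(1/2)\cdot 2=2>0$, as required. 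All three hypotheses are verified, and Lemma \ref{theo:theo_Haltmeier} yields the claimed uniqueness.
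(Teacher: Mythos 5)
Your proposal is correct and follows essentially the same route as the paper: both invoke the Schiefeneder--Haltmeier lemma on $[r_m,r_M]$, verify smoothness from the integer-power-of-$(p-r)$ structure of \eref{eq:kernel_7}, identify the diagonal zeros with the simple roots of $P_l^0(R/s)$, and establish hypothesis (iii) by showing that the contribution of the coefficient of $(p-r)$ cancels on the diagonal via the Legendre differential equation (your $B(s)=0$ is exactly the identity $Q_3^2Q_1Q_4/(1-Q_4^2)=Q_3Q_2$ at $p=r$ used implicitly in the paper), leaving $\kappa_1=-2\kappa_2$ and the ratio equal to $2$. The computation checks out, including the values $\partial_pQ_4|_{p=r=s}=-2R/s^2$ and $\partial_rQ_4|_{p=r=s}=R/s^2$ and the resulting $\kappa_1,\kappa_2$, which agree with the paper's explicit formulas.
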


\begin{proof}
In order to show the uniqueness of the solution of \eref{eq:TRT_SH_to_kernel_5}, we use lemma \ref{theo:theo_Haltmeier}. So, we must check assumptions $(i)$, $(ii)$ and $(iii)$. We define the triangle $\bigtriangleup(r_{m},r_{M}):=\lbrace r_{m} \leq r \leq p \leq r_{M} \rbrace$. 
\begin{enumerate} 
\item 
As it is evident from \eref{eq:kernel_3}, the kernel is smooth inside $\bigtriangleup(r_{m},r_{M})$. Notice in \eref{eq:kernel_4} that terms containing odd powers of $\sqrt{p-r}$ always vanish. Moreover, the double summation involves only non-negative integer powers of $p-r$ times products of powers of factors $Q_1$, $Q_2$, $Q_3$ and $Q_4$. Since all the functions involved are smooth on the diagonal $p=r$, so is $K_l$.

\item 
From \eref{eq:kernel_7} we obtain the expression for the kernel on the diagonal

$$K_l(r,r)=\case{\sqrt{8}\pi}{R}{\sqrt r}\sqrt{r^2-R^2}\,P\left( \frac{R}{r}\right).$$

So, $K_l(r,r)$ has the same zeros as $P\left( \case{R}{r}\right)$ since $0<R<r_{m}\leq r$. Given that $P$ is an orthogonal polynomial, $P\left( \case{R}{r}\right)$ has a finite number of simple roots. Then, the values $r_0$ such that $P\left( \case{R}{r_0}\right)=0$ are the zeros of the kernel on the diagonal, i.e. $K_l(r_0,r_0)=0$.
 
\item 
We obtain an expression for $\nabla K_l(r,r)$ and evaluate it at zeros $r_0$. We see in \eref{eq:kernel_7} that the derivatives of terms containing factors $(p-r)^i$ with $i=2,3,...$ vanish on the diagonal $p=r$. So, we define a new function

\begin{eqnarray}\label{eq:kernel_ii}
\fl
\overline K_l(p,r)= 2 Q_1   P(Q_4) + 2(p-r)\left[ \case{1}{2} Q_3^2Q_1  P''(Q_4)  -Q_3Q_2  P'(Q_4) \right],
\nonumber\\
\quad
\end{eqnarray}

\noindent
which verifies $\nabla K_l(r,r)=\nabla \overline K_l(r,r)$ and is easier to handle. Then, we differentiate $\overline K_l$ with respect to $p$

\begin{eqnarray}\label{eq:kernel_dp}
\fl
\frac{\partial \overline  K_l}{\partial p}(p,r)= 2 \frac{\partial Q_1 }{\partial p} P(Q_4)
+  2  Q_1 \frac{\partial }{\partial p} P(Q_4)
+ 2 \left[ \case{1}{2} Q_3^2Q_1  P''(Q_4) -Q_3Q_2  P'(Q_4)  \right]\cr 
+2(p-r) \frac{\partial }{\partial p} \left[ \case{1}{2} Q_3^2Q_1  P''(Q_4)  -Q_3Q_2  P'(Q_4)   \right] .
\nonumber\\
\quad
\end{eqnarray}

\noindent
Using the identity $P''(x)=[2xP'(x)-l(l+1)P(x)]/(1-x^2),$ see for example \cite{Laden,Jackson}, and the fact that $Q_4(p,r)<1,$ we substitute $P''(Q_4)$ in the third term in \eref{eq:kernel_dp}, and we arrive to

\begin{eqnarray}\label{eq:kernel_dp2}
\fl
\frac{\partial \overline  K_l}{\partial p}(p,r)= \left(  2 \frac{\partial Q_1 }{\partial p} - \case{Q_3^2Q_1 l(l+1)}{1-Q_4^2}\right) P(Q_4)
+2(p-r) \frac{\partial }{\partial p} \left[\case{1}{2} Q_3^2Q_1  P''(Q_4)  -Q_3Q_2  P'(Q_4)   \right] \cr
+ 2 \left[  Q_1\frac{\partial Q_4 }{\partial p} - Q_3Q_2 + \case{Q_3^2Q_1Q_4}{1-Q_4^2}  \right]P'(Q_4).
\nonumber\\
\quad
\end{eqnarray}

\noindent
We proceed identically for the other variable $r$

\begin{eqnarray}\label{eq:kernel_dr}
\fl
\frac{\partial \overline  K_l}{\partial r}(p,r)= 2 \frac{\partial Q_1 }{\partial r} P(Q_4)+  2  Q_1 \frac{\partial }{\partial r} P(Q_4)
- 2 \left[ Q_3^2Q_1  \case{1}{2} P''(Q_4)  -Q_3Q_2 P'(Q_4)   \right]\cr 
+2(p-r) \frac{\partial }{\partial r} \left[ \case{1}{2} Q_3^2Q_1  P''(Q_4)  - Q_3Q_2  P'(Q_4)   \right] 
\nonumber\\
\quad
\end{eqnarray} 

\noindent
and we obtain

\begin{eqnarray}\label{eq:kernel_dr2}
\fl
\frac{\partial \overline  K_l}{\partial r}(p,r)= \left(  2 \frac{\partial Q_1 }{\partial r} + \case{Q_3^2Q_1 l(l+1)}{1-Q_4^2}\right) P(Q_4)
+2(p-r) \frac{\partial }{\partial r} \left[\case{1}{2} Q_3^2Q_1  P''(Q_4)  -Q_3Q_2  P'(Q_4)   \right] \cr
+ 2 \left[  Q_1\frac{\partial Q_4 }{\partial r} + Q_3Q_2 - \case{Q_3^2Q_1Q_4}{1-Q_4^2}  \right]P'(Q_4).
\nonumber\\
\quad
\end{eqnarray} 

\noindent
The explicit forms \eref{eq:kernel_dp2} and \eref{eq:kernel_dr2} simplify the calculation of $\frac{\partial K_l}{\partial r}(r,r)$ and $\frac{\partial K_l}{\partial p}(r,r)$. Evaluating at $p=r=r_0,$ the first and second terms in \eref{eq:kernel_dp2} and \eref{eq:kernel_dr2} vanish. In fact, the first terms vanish because $P(Q_4(r_0,r_0))=P(\frac{Rr_0}{r_0^2})=P(\frac{R}{r_0})=0,$ while the second terms vanish because of the factor $(p-r).$ Thus, we have

\begin{eqnarray}\label{eq:K_grad}
\fl
(\kappa_1,\kappa_2)=\nabla K_l(r_0,r_0)=\nabla \overline K_l(r_0,r_0)=\cr
\fl 2\left(   \left[  Q_1\frac{\partial Q_4 }{\partial p} - Q_3Q_2 + \case{Q_3^2Q_1Q_4}{1-Q_4^2}  \right] ,
 \left[  Q_1\frac{\partial Q_4 }{\partial r} + Q_3Q_2 - \case{Q_3^2Q_1Q_4}{1-Q_4^2}  \right]\right)P'(Q_4)  \biggr|_{p=r=r_0}.
\nonumber\\
\quad
\end{eqnarray} 

\noindent
We evaluate (\ref{eq:K_grad}) using the definition of functions $Q_1,\,Q_2,\,Q_3$ and $Q_4$, then the expressions for the gradient components are

\begin{equation}
\kappa_1= - 4 \pi P'\left( \case{R}{r_0} \right) \case{\sqrt{2 r_0}\sqrt{r_0^2-R^2}}{r_0^2}
\end{equation}

\noindent and

\begin{equation}
\kappa_2= 2 \pi P'\left( \case{R}{r_0} \right) \case{\sqrt{2 r_0}\sqrt{r_0^2-R^2}}{r_0^2}.
\end{equation}

\noindent 
Finally, we get the ratio

$$ 1 + \frac{1}{2}\frac{\kappa_1}{\kappa_1+\kappa_2}=2>0,$$

\noindent as we wanted to show.

\end{enumerate}

\noindent
Thus, the radial components $f_{lm}(r)$ of function $f$ can be recovered uniquely from the coefficients $(\RR_\TT f)_{lm}(p)$.

\end{proof}

\noindent
We can express data in terms of variable $p$ instead of the scattering angle $\omega$

$$\RR_\TT f(p,\alpha,\beta) = \sum_{l\in\NN}\sum_{|m|\leq l}(\RR_\TT f)_{lm}(p) Y_l^m(\beta,\alpha).$$
\noindent
From the geometrical point of view, $p$ is the diameter of the circles generating the torus, see section \ref{sec:direct_model} for the integral definition of $\RR_\TT f(p,\alpha,\beta)$.
The following corollary summarizes our result on uniqueness.
%

%
%
%

\begin{corollary}[Invertibility of the  $\RR_\TT f$]\label{theo:coro} If $f_1$ and $f_2$ are compact supported functions in $C^\infty(S_h(r_m,r_M))$ and  $\RR_\TT f_1=\RR_\TT f_2$, then $f_1=f_2$.


\end{corollary}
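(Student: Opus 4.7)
The plan is to reduce the corollary to Theorem \ref{theo:main} via linearity and the completeness of spherical harmonics. By linearity of $\mathcal{R}_\mathcal{T}$, assuming $\mathcal{R}_\mathcal{T} f_1 = \mathcal{R}_\mathcal{T} f_2$ is equivalent to $\mathcal{R}_\mathcal{T} f = 0$ for $f := f_1 - f_2$, which is again a compactly supported $C^\infty$ function with support in $S_h(r_m, r_M)$. So it suffices to show that the toric Radon transform has trivial kernel on this class.

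First I would expand both $f$ and its transform in spherical harmonics via \eref{eq:SH_F} and \eref{eq:TRT_SH_0}. Since $\{Y_l^m\}$ is a complete orthonormal system on $S^2$, the vanishing of $\mathcal{R}_\mathcal{T} f(\alpha,\beta,\omega)$ for all $(\alpha,\beta) \in [0,2\pi)\times[0,\pi]$ and all $\omega \in (\pi/2,\pi)$ forces every coefficient to vanish:
\begin{equation*}
(\mathcal{R}_\mathcal{T} f)_{lm}(\omega) = 0 \quad \text{for all } l \in \mathbb{N},\; |m|\le l,\; \omega \in (\pi/2,\pi).
\end{equation*}
Applying the change of variable $p = R/\sin\omega$ used just before \eref{eq:TRT_SH_to_kernel_5}, this is equivalent to $(\mathcal{R}_\mathcal{T} f)_{lm}(p) = 0$ for all $p$ in the image interval, which contains $[r_m, r_M]$ by the support assumption on $f$.

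Next, for each fixed pair $(l,m)$, the smoothness of $f$ together with the definition $f_{lm}(r) = \langle f(r\,\cdot\,), Y_l^m\rangle$ guarantees $f_{lm} \in C([r_m, r_M])$ (indeed much more, but continuity is all that is required). Theorem \ref{theo:main} then asserts that the generalized Abel equation \eref{eq:TRT_SH_to_kernel_5} with vanishing left-hand side has only the trivial solution in $C([r_m, r_M])$, so $f_{lm} \equiv 0$ on $[r_m, r_M]$ (and automatically outside by the support hypothesis).

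Finally, reassembling the spherical harmonic expansion \eref{eq:SH_F} on each sphere of radius $r$ yields $f \equiv 0$, hence $f_1 = f_2$. No serious obstacle is anticipated since Theorem \ref{theo:main} already carried out the hard uniqueness analysis of the Abel-type equation; the only items to verify carefully are (i) that the completeness argument legitimately converts pointwise vanishing of $\mathcal{R}_\mathcal{T} f$ into coefficient-wise vanishing, and (ii) that the reparametrization $\omega \mapsto p$ is a smooth bijection between $(\pi/2,\pi)$ and the relevant range of $p$, so that the hypothesis of Theorem \ref{theo:main} on $[r_m, r_M]$ is met. Both are routine.
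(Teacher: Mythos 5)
Your proposal is correct and follows essentially the same route as the paper: reduce by linearity to showing the kernel of $\RR_\TT$ is trivial, use completeness of the spherical harmonics to convert $\RR_\TT f = 0$ into $(\RR_\TT f)_{lm} = 0$ for all $l,m$, and invoke Theorem \ref{theo:main} to conclude each $f_{lm} \equiv 0$ and hence $f = 0$. The paper's own proof is a terser version of exactly this argument; your added checks on the reparametrization $\omega \mapsto p$ and the continuity of $f_{lm}$ are sensible details it leaves implicit.
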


\begin{proof}
Let $f$ satisfy $(\RR_\TT f)_{lm}=0$ for all $l,m$ in the spherical harmonics expansion. According to Theorem \ref{theo:main}, there is a unique solution $f_{lm}=0$, which implies $f=0$. The linearity of  $\RR_\TT f$ yields the claim.
\end{proof}

\section{Numerical Simulations}

\label{sec:NS}

We use a discrete spherical harmonics expansion of the functions representing the data and the object in order to carry out numerical reconstructions. We employ a product integration approach to model the discrete problem in the spherical harmonic domain. Then, Tikhonov regularization is used to solve a set of normal equations.
\subsection{Alternative forward model}

\label{sec:direct_model}
An alternative definition of our toric Radon transform useful in numerical simulation is

\begin{equation}\label{eq:TRT_p2}
\fl \RR_\TT f(p,\alpha,\beta)=p^2
\int_0^\pi d\gamma\int_0^{2\pi}d\psi  \cos \left(  \gamma-\cos^{-1}\frac{R}{p} \right)   \sin\gamma f\left(   \Phi^{p,\alpha,\beta}(\gamma,\psi) \right),      
\end{equation}

\noindent
where $p\in(R,+\infty)$ is the diameter of the circles making the torus and 

\begin{equation}\label{eq:phi_p}
\Phi^{p,\alpha,\beta}(\gamma,\psi)=r \Theta^{\alpha,\beta}(\gamma,\psi)|_{r=p\cos(\gamma-\cos^{-1}\frac{R}{p})}
\end{equation}
\noindent
is the parametrization of the toric surface labeled by variables  $(p,\alpha,\beta)$. The unit vector $\Theta^{\alpha,\beta}(\gamma,\psi)$ was introduced in the definition \eref{eq:phi_omega}. The scattering angle $\omega$ and the diameter $p$ are related through $p=R/\sin\omega$.


\subsection{The algebraic problem}

In this section we use a discrete spherical harmonics expansion of order $N$ to write the problem as an algebraic product suitable for Tikhonov regularization. This is achieved using numerical algorithms for the Discrete-Inverse Spherical Harmonics transform (DSHT-IDSHT), see an outline in  \ref{sec:DSHE}, with data as well as with the sought function according to equations \eref{eq:g_DSHT} and \eref{eq:f_DSHT}. The pair DSHT-IDSHT allows us to write the problem in the domain of the spherical harmonics according to

\begin{equation}\label{eq:g_DSHT}
\mathbf{g}_{nk}^j  \mathop{\rightleftarrows}^{\mathrm{DSHT}}_{\mathrm{IDSHT}} \mathbf{g}_{lm}^j
\end{equation}

\begin{equation}\label{eq:f_DSHT}
 \mathbf{f}_{nk}^i  \mathop{\rightleftarrows}^{\mathrm{DSHT}}_{\mathrm{IDSHT}} \mathbf{ f }_{lm}^i
\end{equation}

\noindent 
where the discrete functions are $\mathbf{g}_{nk}^{j}=  \RR_\TT f(p_j,\alpha_n,\beta_k)$, $\mathbf{g}_{lm}
^j=\left( \RR_\TT f \right)_{lm} (p_j)$ with $j=0,...,N_p-1$, $\mathbf{f}_{nk}^i=f(r_i\cos\psi_n\sin\gamma_k, r_i\sin\psi_n\sin\gamma_k,r_i\cos \gamma_k)$ and $\mathbf{f}_{lm}^i=f_{lm}(r_i)$ with $i=0,...,N_r-1$. Variables $p$ and $r$ are in the range $(R,r_M^*]$ where $r_M^*$ is such that $r_M^*\geq r_M$ to cover the radial support of function $f$. In this domain, the components of the vector representing the unknown function $\mathbf{f}_{lm}$ are related to components of the vector for the known data $\mathbf{g}_{lm}$ through the equation $\mathbf{g}_{lm}=A_l \mathbf{f}_{lm}$ that is an algebraic relative of equation \eref{eq:TRT_SH_to_kernel_5}. We are aimed to solve the equation for each combination $l,m$.

\subsection{Matrix generation}
\label{sec:matrix}

Matrix $A_l \in \R^{N_p \times N_r}$ is the key for solving the numerical inverse problem. Given that we use an expansion of order $N$ and the kernel in \eref{eq:TRT_SH_to_kernel_5} is $l$-dependent and $m$-independent, there are only $N+1$ different matrices. Adopting the convention $Np=Nr=M$ and splitting up the integration range in equation \eref{eq:TRT_SH_to_kernel_5}, we rewrite

\begin{equation}
\left( \RR_\TT f \right)_{lm} (p_j) = \sum_{q=1}^{j} \int_{r_{q-1}}^{r_{q}} dr \, f_{lm}(r) \frac{r}{\sqrt{p_j^2-r^2}} \tilde {K_l}(p_j,r)
\end{equation}

\noindent
where $r_{q}=R+q(r_{M}^*-R)/M$ and $\tilde {K_l}(p_j,r)=\sqrt{p_j + r}{K_l}(p_j,r)/r$. We use product integration \cite{haltmeier2017inversion} but instead of using the mid-point rule we approximate $\tilde {K_l}(p_j,r)$ by its average  $\tilde {K_l}_{q}(p_j)$  in ten equidistant points in the interval $[r_{q-1},r_{q}]$. Thus, the discrete form for the equation is 


\begin{equation}\mathbf{g}_{lm}^j= \left( \RR_\TT f \right)_{lm} (p_j)\simeq \sum_{q=1}^{j} w_{j,q}\tilde {K_l}_{q}(p_j)f_{lm}(r_q),\end{equation}

\noindent
where the weighting factor has been calculated analytically according to

\begin{equation}w_{j,q} : = \int_{r_{q-1}}^{r_{q}} dr  \frac{r}{\sqrt{p_j^2-r^2}}
\end{equation}

\noindent
with $j,q=1,...,M$ and $w_{j,q}=0$ if $j<q$. Finally, the entries of the lower-triangular matrix in equation $\mathbf{g}_{lm}=A_l \mathbf{f}_{lm}$ are

\begin{equation}\label{eq:matrix_A}
A_l = \left( w_{j,q} \tilde {K_l}_{q}(p_j) \right)_{j,q=1,....M} \in \R^{M \times M}.
\end{equation}

\subsection{Overview of the reconstruction algorithm}

The aim now is recovering vector $\mathbf{f}_{lm}$ from $\mathbf{g}_{lm}$ using the equation $\mathbf{g}_{lm}=A_l \mathbf{f}_{lm}$. When $A_l$ is non-singular and well-conditioned the problem can be easily solved by forward substitution. Because the kernel has zeros on its diagonal, matrix $A_l$ may have diagonal entries being zero or close to zero. Thus, solving the system may be ill-conditioned and regularization methods must be applied. The algorithm is summarized in Table \ref{tab:algo_1} and is aimed to solve the matrix problem $\mathbf{g}_{lm}=A_l \mathbf{f}_{lm}$ for $l=0,...,N$ and $|m| \leq l$. Tikhonov regularization requires to solve the normal equations

\begin{equation}\label{eq:normal_eq}
 (A_{l}^T A_{l}+ \lambda I)\mathbf{f}_{lm} = A_{l}^T\mathbf{g}_{lm},
\end{equation}

\noindent
where $I$ is the identity matrix and $\lambda$ is a regularization parameter. 
 
\begin{table}

\caption{\label{tab:algo_1}Sketch of the reconstruction algorithm.}
\begin{indented}
\item[]\begin{tabular}{l}
\br
\rm Matrix $A_l$ is precalculated according to \eref{eq:matrix_A}.\\

\mr
1:  Chose a suitable value for $\lambda$ according to signal conditions\\
2:  Perform DSHT \eref{eq:g_DSHT} to data $\mathbf{g}_{nk}^j$ to obtain $\mathbf{g}_{lm}^j $\\
3:  For each pair $l,m$ : solve the normal equations in \eref{eq:normal_eq}\\
    \quad and obtain an approximation of $\mathbf{f}_{lm}^i$\\ 
4:  Perform IDSHT \eref{eq:f_DSHT} to obtain the reconstruction $\mathbf{f}_{nk}^i$ \\
5:  Interpolate to obtain the function in discrete Cartesian coordinates  $\tilde \mathbf{f}$\\
\br
\end{tabular}
\end{indented}
\end{table}

For non-vanishing kernel diagonals, the product integration method is convergent \cite{Weiss71}. To the best of our knowledge, there are no reported results on the convergence of product integration with vanishing kernel-diagonals. As suggested in \cite{haltmeier2017inversion}, numerical evidence indicate that, for a suitable selection of the regularization parameter, a convergence analysis may be possible.

\subsection{Results}

\label{sec:results}


Data was simulated using equation \eref{eq:TRT_p2}, a system where detector moves on a sphere of radius $R=1/8$ was considered. The object was a $64\times64\times64$ volume with two balls with different intensities, and different contrasts, gray on black, white on gray, etc. One object has also a defect (crack) in some planes, see figure \ref{fig:phantom}. The function was supported in a cube of side $L=1$ in the first octant with coordinate $(x_{\min},y_{\min},z_{\min})=(L/64,L/64,R)$. Discretization parameters of data are: $N_\alpha=513$, $N_\beta=256$ and $N_p=512$. The maximal diameter of the circles generating the torus is $r^*_{M}=2r_{M}$ where $[R, \,r^*_{M}]$ is the radial support of the cube containing the phantom. Numerical integration is performed in variables $\gamma$ and $\psi$ with $\Delta \psi = 2\pi/N_\psi$ and the variable step  $\Delta \gamma =  \cos^{-1}  \left( p/R \right)    /N_\gamma$, with $N_\gamma=256$ and $N_\psi=256$. Figure 
\ref{fig:data_1} shows simulated data for different values of angle $\alpha$. The trapezoidal rule was used to perform numerical integration. According to the discretization chosen for data, the order of the spherical harmonics expansion was $N=256$ ($N_\alpha=2N+1$). In order to get more realistic simulations, data  $\mathbf{g}=\mathbf{g}_{nk}^j$ were also corrupted with additive Gaussian noise with zero mean. Noise variance was manually adjusted to get several signal-to-noise ratios: SNR$=10$, $20$ and $30$ dB. These SNR correspond to $\epsilon=29$, $10$ and $3\%$ noise levels where $\epsilon=100||\mathbf{\tilde g}-\mathbf{ g}||_2/|| \mathbf{ g}||_2$, and $\mathbf{\tilde g}$ is the corrupted data. In order to asses the quality of reconstruction we used the following measures of error: the Normalized Mean Square Error ($\%$)

\begin{equation}\label{eq:NMSE}
\mathrm{NMSE}=\frac{100}{N^3}\frac{ ||\mathbf{f} -\tilde {\mathbf{ f}}||_2^2 }{\max_i \left\lbrace \mathbf{f}_i^2 \right\rbrace }
\end{equation}

\noindent
and the Normalized Mean Absolute Error ($\%$)

\begin{equation}\label{eq:NASE}
\mathrm{NMAE}=\frac{100}{N^3}\frac{|| \mathbf{f} -\tilde {\mathbf{ f}} ||_1 }{\max_i \left\lbrace \mathbf{f}_i \right\rbrace },
\end{equation}

\begin{figure}
\begin{center}
\begin{tabular}{ccccc}

\resizebox*{3.5cm}{!}{\includegraphics{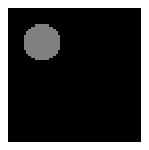}} & \resizebox*{3.5cm}{!}{\includegraphics{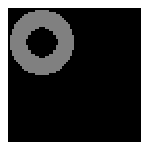}} &
\resizebox*{3.5cm}{!}{\includegraphics{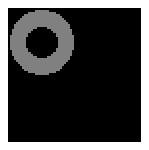}}& \resizebox*{3.5cm}{!}{\includegraphics{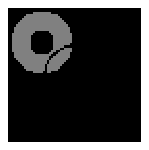}}& \resizebox*{0.8cm}{!}{\includegraphics{barra.eps}}\\

\resizebox*{3.5cm}{!}{\includegraphics{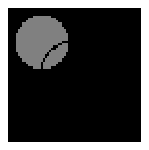}} & \resizebox*{3.5cm}{!}{\includegraphics{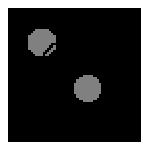}} &
\resizebox*{3.5cm}{!}{\includegraphics{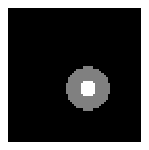}}& \resizebox*{3.5cm}{!}{\includegraphics{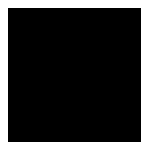}}& \resizebox*{0.8cm}{!}{\includegraphics{barra.eps}}\\

\end{tabular}
\end{center}
\caption{\label{fig:phantom} Original 3D phantom used for simulations. Notice the crack in some planes.}
\end{figure}
\begin{figure}
\begin{center}
\includegraphics[width=0.35\textwidth]{g1.eps}\includegraphics[width=0.35\textwidth]{g192.eps}\\\includegraphics[width=0.35\textwidth]{g320.eps}\includegraphics[width=0.35\textwidth]{g384.eps}

\caption{\label{fig:data_1} Data simulated using \eref{eq:TRT_p2}. Function  $\RR_\TT(p,\alpha,\beta)$ is shown for the sets $\alpha=0,\,3\pi/4,\,5\pi/4,\,3\pi/2$.} 
\end{center}
\end{figure}

\noindent
where  $\mathbf{f}$ is the original image and $\tilde{\mathbf{ f}}$ is the reconstruction. We used the algorithm in Table \ref{tab:algo_1} to carry out reconstructions for two values of the regularization parameter, $\lambda=0.01$ (noiseless data) and $\lambda=0.05$ (noisy). These values were chosen heuristically and are the same for all equations in a set \eref{eq:normal_eq}, i.e. they do not depend on $l,m$. Figure \ref{fig:noiseless} shows reconstructions for noiseless data and figures \ref{fig:30dB} to \ref{fig:10dB} show reconstructions from corrupted data. No post-processing was applied to images. The original function as well as reconstructions are shown at planes $z= 4$, 14, 15, 22, 26, 31, 38 and 58 from top to bottom and from left to right and error metrics are displayed in figure captions. Reconstructions exhibit acceptable quality: structures inside the object are distinguished, shapes are kept and error metrics seem to be reasonable. Moreover, the crack is visible in all the images where its is expected. There are, however, background artifacts and blurring, particularly in upper planes. As expected, stronger regularization is required for noisy data and error metrics get worse with the level of noise. Although Tikhonov regularization performs well, reconstructions can be improved using more advanced regularization techniques. There are algorithms enforcing total variation minimization in the Cartesian domain \cite{Condat13,Condat14} that can be applied after step 5 in the algorithm in Table \ref{tab:algo_1}.  

From an algorithmic point of view, the reconstruction approach allows to split up the problem in several equations reducing the size of the matrices involved with respect to a standard algebraic treatment. In this framework, computation time may be saved through parallelization since the resulting algebraic equations are independent.

%
%

%

\begin{figure}
\begin{center}
\begin{tabular}{ccccc}

\resizebox*{3.5cm}{!}{\includegraphics{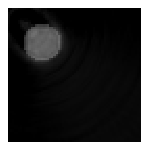}} & \resizebox*{3.5cm}{!}{\includegraphics{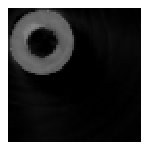}} &
\resizebox*{3.5cm}{!}{\includegraphics{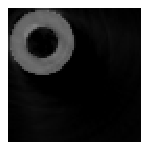}}& \resizebox*{3.5cm}{!}{\includegraphics{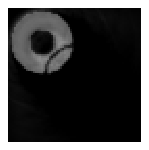}}& \resizebox*{0.8cm}{!}{\includegraphics{barra.eps}}\\

\resizebox*{3.5cm}{!}{\includegraphics{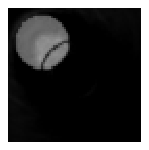}} & \resizebox*{3.5cm}{!}{\includegraphics{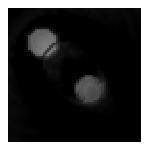}} &
\resizebox*{3.5cm}{!}{\includegraphics{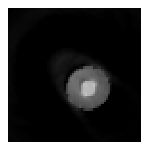}}& \resizebox*{3.5cm}{!}{\includegraphics{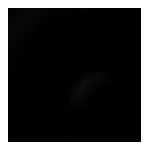}}& \resizebox*{0.8cm}{!}{\includegraphics{barra.eps}}\\

\end{tabular}
\end{center}
\caption{\label{fig:noiseless} Reconstructions from noiseless data. Errors are NMSE=0.32 $\%$, NMAE=3.81 $\%$. Regularization parameter $\lambda=0.01$.} 
\end{figure}

%
%
%
%
%
%
%

\begin{figure}
\begin{center}
\begin{tabular}{ccccc}

\resizebox*{3.5cm}{!}{\includegraphics{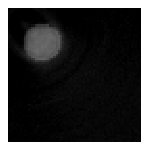}} & \resizebox*{3.5cm}{!}{\includegraphics{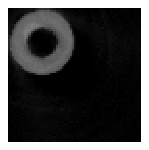}} &
\resizebox*{3.5cm}{!}{\includegraphics{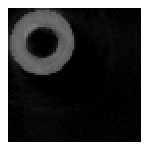}}& \resizebox*{3.5cm}{!}{\includegraphics{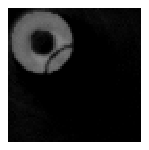}}& \resizebox*{0.8cm}{!}{\includegraphics{barra.eps}}\\

\resizebox*{3.5cm}{!}{\includegraphics{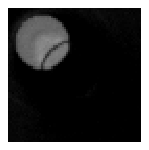}} & \resizebox*{3.5cm}{!}{\includegraphics{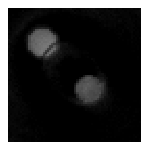}} &
\resizebox*{3.5cm}{!}{\includegraphics{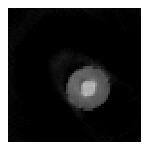}}& \resizebox*{3.5cm}{!}{\includegraphics{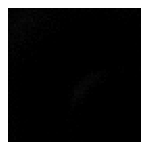}}& \resizebox*{0.8cm}{!}{\includegraphics{barra.eps}}\\
\end{tabular}
\end{center}
\caption{\label{fig:30dB} Reconstructions from noisy data with SNR=30 dB (relative level of noise 3 $\%$). Errors are NMSE=0.34 $\%$, NMAE=3.77 $\%$. Regularization parameter $\lambda=0.05$.} 
\end{figure}

\begin{figure}
\begin{center}
\begin{tabular}{ccccc}

\resizebox*{3.5cm}{!}{\includegraphics{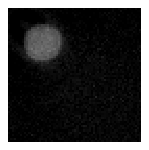}} & \resizebox*{3.5cm}{!}{\includegraphics{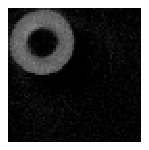}} &
\resizebox*{3.5cm}{!}{\includegraphics{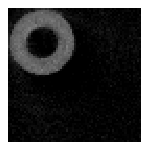}}& \resizebox*{3.5cm}{!}{\includegraphics{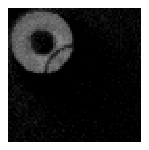}}& \resizebox*{0.8cm}{!}{\includegraphics{barra.eps}}\\

\resizebox*{3.5cm}{!}{\includegraphics{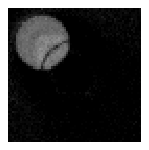}} & \resizebox*{3.5cm}{!}{\includegraphics{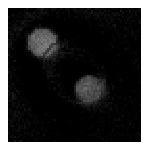}} &
\resizebox*{3.5cm}{!}{\includegraphics{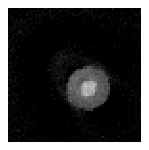}}& \resizebox*{3.5cm}{!}{\includegraphics{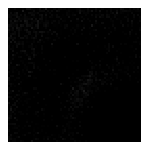}}& \resizebox*{0.8cm}{!}{\includegraphics{barra.eps}}\\

\end{tabular}
\end{center}
\caption{\label{fig:20dB} Reconstructions from noisy data with SNR=20 dB (relative level of noise 10 $\%$). Errors are NMSE=0.40 $\%$, NMAE=4.35 $\%$. Regularization parameter $\lambda=0.05$.} 
\end{figure}

\begin{figure}
\begin{center}
\begin{tabular}{ccccc}

\resizebox*{3.5cm}{!}{\includegraphics{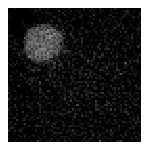}} & \resizebox*{3.5cm}{!}{\includegraphics{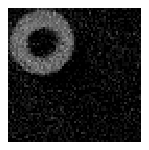}} &
\resizebox*{3.5cm}{!}{\includegraphics{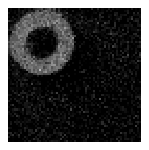}}& \resizebox*{3.5cm}{!}{\includegraphics{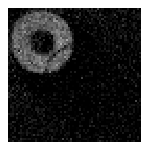}}& \resizebox*{0.8cm}{!}{\includegraphics{barra.eps}}\\

\resizebox*{3.5cm}{!}{\includegraphics{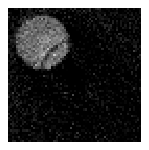}} & \resizebox*{3.5cm}{!}{\includegraphics{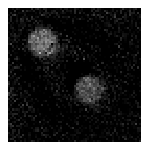}} &
\resizebox*{3.5cm}{!}{\includegraphics{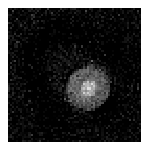}}& \resizebox*{3.5cm}{!}{\includegraphics{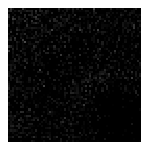}}& \resizebox*{0.8cm}{!}{\includegraphics{barra.eps}}\\

\end{tabular}
\end{center}
\caption{\label{fig:10dB} Reconstructions from noisy data with SNR=10 dB (relative level of noise 29 $\%$). Errors are NMSE=0.92 $\%$, NMAE=7.38 $\%$. Regularization parameter $\lambda=0.05$.} 
\end{figure}

\section{Discussion}
\label{sec:discusion}

What follow are some final comments on our work.
 
\begin{itemize}

\item The toric transform we study exhibits rotational invariance and we employed spherical harmonic expansion to address the problem of invertibility. In order to study solution uniqueness, we used a theory for Abel and first kind Volterra equations with kernels having zeros on the diagonal \cite{SMMH2017}. The proof relied on the analysis of the kernel gradient on its diagonal. In other studies on 3D CST \cite{WL2018}, spherical harmonic expansion was used together with an approach to solve weakly singular Volterra equations of the first kind based on the theory in \cite{Bownds76}. 

%
\item Numerical simulations confirm an additional advantage of this modality: the scanning is feasible when the system is smaller than the object under study. In the proposed simulation we use an object whose side was eight times larger than the radius of the detection sphere.
%

\item Since the model neglects attenuation, extra analysis should be carried out when addressing the realistic scenario. When attenuation is significantly low, few photon counts are expected leading to a sparse signal. In such case, the algorithm may be adapted to deal with sparse data. The influence of attenuation is modeled through weighted Radon transforms on tori leading to weighted kernels. Under some assumptions, this mathematical framework may be applied with weighted kernels both in the theoretical treatment as well as in reconstructions. Other works address the reconstruction problem when attenuation is present \cite{RH2018,RH2020}.


\end{itemize}

\section{Conclusion}

We studied a Compton scattering tomography in three dimensions suitable for scanning large objects. This system has interesting advantages such as fixed and unique source and compactness. Measured data was modeled by a toric Radon transform. We demonstrated the invertibility of the toric transform by proving the uniqueness of the solutions of the one dimensional Abel's type equation resulting from its spherical harmonics expansion. In addition, we developed a reconstruction method based on the discrete spherical harmonics expansion and Tikhonov regularization. Numerical simulations confirm the feasibility of the approach. Future challenges emerge such as incorporating attenuation to the model and dealing with sparse data. Some studies on that direction are on the way.

\section*{Acknowledgements}
 J. Cebeiro is supported a CONICET postdoctoral grant ($\#$ 171800). D. Rubio, J. Cebeiro and M. A. Morvidone are partially supported by SOARD-AFOSR (grant number FA9550-18-1-0523). C. Tarpau research work is supported by grants from R\'egion Ile-de-France (in Mathematics and Innovation) 2018-2021 and LabEx MME-DII (Mod\`eles Math\'ematiques et Economiques de la Dynamique, de l'Incertitude et des Interactions) (No. ANR-11-LBX-0023-01).  

\appendix
\section{Spherical Harmonics Expansion}

\label{sec:SH_reduction}

Here, we present the proof lemma \ref{theo:SHE} explaining how to get form \eref{eq:TRT_omega} to \eref{eq:TRT_SH_RT_lm}. First, we define some useful rotation properties.

\subsection{Rotation properties}

\label{sec:SH_rot}
Given $h^{-1}=u(\alpha)a(\beta)$ as introduced in \eref{eq:u_alpha} we define the rotation operator $\Lambda_h$ by its action on a function $f$ as: 
\begin{equation}
\label{eq:F_rotated}
\left(\Lambda_hf\right) \left( r \Theta(\gamma,\psi)  \right) = f \left(r  h^{-1}  \Theta(\gamma,\psi) \right) .
\end{equation}
Applying this linear operator $\Lambda_h$ to a function expanded as in~(\ref{eq:SH_F}) we have
\begin{equation}\label{eq:F_rotated_2}
(\Lambda_hf)(r\Theta(\gamma,\psi))=\sum_{l\in\NN}\sum_{|m|\leq l}f_{lm}(r)(\Lambda_hY_l^m)(\gamma,\psi).
\end{equation}

Properties $(i)$-$(iii)$ will be useful in what follows:

\begin{enumerate}
\item
Any rotated spherical harmonic of degree $l$ is a linear combination of $Y^n_l$ with $|n|\leq l:$
\begin{equation}\label{eq:Y_rotated}
(\Lambda_hY_l^m)(\gamma,\psi)= \sum_{|n|\leq l}Y_l^n(\gamma,\psi) D_{n,m}^{(l)}(h),
\end{equation}
\item  Matrices $D^{(l)}(h)$ verify: 
\begin{equation} \label{eq:SH_prop1} D_{0,m}^{(l)} (h)=\overline { D_{m,0}^{(l)}}  (h^{-1}). \end{equation}
\noindent
\item From the definition of the matrix entries $ D_{n,m}^{(l)}(h)$ \cite{BL81}, it follows that:
\begin{equation} 
\label{eq:SH_prop2}
Y_l^m(\alpha,\beta)= \sqrt{\frac{2l+1}{4\pi}} \overline {D^{(l)}_{m,0}}(h^{-1}).   
\end{equation}
\noindent
\end{enumerate}
See \cite{BL81} and \cite{DH1994} for details.

\subsection{Proof of lemma \ref{theo:SHE}}

\begin{proof}

From \eref{eq:TRT_omega} and \eref{eq:F_rotated} we have:

\begin{equation}\label{eq:TRT_SH_1}
\mathcal{R}_\mathcal{T}f(\alpha,\beta,\omega)=  \int_0^{2\omega-\pi} d\gamma  \, \int_0^{2\pi} d\psi \, r^\omega(\gamma) \frac{\sin \gamma}{\sin \omega} \,  \left(\Lambda _hf \right)  \left(  r^\omega(\gamma)\Theta(\gamma,\psi \right))  .
\end{equation}

\noindent
Using \eref{eq:F_rotated_2} and \eref{eq:Y_rotated}, we can write:

\begin{eqnarray}\label{eq:TRT_SH_2}
\fl \RR_\TT f(\alpha,\beta,\omega)={  \sum_{l\in\NN}\sum_{|m|\leq l}}\int_0^{2\omega-\pi} d\gamma  \,\int_0^{2\pi} d\psi \,r^\omega(\gamma) \frac{\sin \gamma}{\sin \omega} \, f_{lm}  \left(  r^\omega(\gamma) \right)  (\Lambda_hY_{l}^m)  \left( \gamma,\psi\right) \nonumber\\
\fl =  {  \sum_{l\in\NN}\sum_{|m|\leq l} \sum_{|n|\leq l} }
\int_0^{2\omega-\pi} d\gamma  \,\int_0^{2\pi} d\psi \, r^\omega(\gamma) \frac{\sin \gamma}{\sin \omega} \, f_{lm}  \left(  r^\omega(\gamma) \right)  Y_{l}^n\left( \gamma,\psi\right)  D^{(l)}_{n,m}(h).\nonumber\\
\end{eqnarray}

\noindent
Now, using \eref{eq:SH_def} and performing the integration on variable $\psi$, we have that the only non-vanishing term is the $n=0$ term, so:

\begin{eqnarray}\label{eq:TRT_SH_3}
\fl  \RR_\TT f(\alpha,\beta,\omega)=  {  \sum_{l\in\NN}\sum_{|m|\leq l} \sum_{|n|\leq l} } \nonumber\\
\fl \int_0^{2\omega-\pi} d\gamma  \,\int_0^{2\pi} d\psi \, r^\omega(\gamma) \frac{\sin \gamma}{\sin \omega} \, f_{lm}  \left(  r^\omega(\gamma) \right)  (-1)^n\sqrt{\frac{(2l+1)(l-n)!}{4\pi(l+n)!}}P^n_l(\cos\gamma)e^{i n\psi}  D^{(l)}_{n,m}(h)     \nonumber\\
\fl=  {  \sum_{l\in\NN}\sum_{|m|\leq l}}
2\pi \int_0^{2\omega-\pi} d\gamma r^\omega(\gamma) \frac{\sin \gamma}{\sin \omega} \, f_{lm}  \left( r^\omega(\gamma) \right) P^0_l(\cos\gamma)  \sqrt{\frac{(2l+1) }{4\pi }}   D_{0,m}^{(l)}(h)   \nonumber\\
\fl= {  \sum_{l\in\NN}\sum_{|m|\leq l}} \sqrt{\frac{(2l+1) }{4\pi }}   D_{0,m}^{(l)}(h)  
2\pi \int_0^{2\omega-\pi} d\gamma r^\omega(\gamma) \frac{\sin \gamma}{\sin \omega} \, f_{lm}  \left(  r^\omega(\gamma) \right) P^0_l(\cos\gamma). 
\end{eqnarray}

\noindent
Then, using properties \eref{eq:SH_prop1} and \eref{eq:SH_prop2} we arrive to
\begin{equation}
\fl \RR_\TT f(\alpha,\beta,\omega)= {  \sum_{l\in\NN}\sum_{|m|\leq l}} Y_l^m(\alpha,\beta)\,\,\,\,
2\pi \int_0^{2\omega-\pi} d\gamma r^\omega(\gamma) \frac{\sin \gamma}{\sin \omega} \, f_{lm}  \left(  r^\omega(\gamma) \right) P^0_l(\cos\gamma)  .
\end{equation}

\noindent
Finally, the factor accompanying the $l,m$-spherical harmonic is the $l,m$-coefficient of the Fourier expansion of $(\RR_\TT f)$ in terms of the scattering angle $\omega$ and we get \eref{eq:TRT_SH_RT_lm}. This completes the proof.

%
%
%
\end{proof}

\section{An Algorithm for Discrete Spherical Harmonics Expansion}
\label{sec:DSHE}

Here, we describe succinctly our implementation of the forward-inverse Discrete Spherical Harmonics Transform (DSHT-IDSHT). The technique is based on the algorithms for the discrete Fourier and Legendre transforms as described in \cite{Basko98,Taguchi_2001,DH1994}. Let's be the discrete function $\textbf{F}_{kn}^j=F(r_j,\theta_k,\varphi_n)$ sampled with the following parameters $\Delta r=(r_{\max}-r_{\min})/(N_r-1)$ and $\Delta \varphi = 2\pi/(2N+1)$ with their corresponding indices $j= 0,...,N_r-1$ and $n=-N,...,N$. Discrete values of variables for radial and azimuthal variables are $r_j=j\Delta r + r_{\min}$ and $\varphi_n= 2\pi n/(2N+1)$. Latitude variable $\theta$ can be arbitrarily sampled in the interval $[0,\pi]$ and is labeled $\theta_k$, $k=1,...,N_\theta$, we used uniform sampling.
The relationship between the discrete function $\textbf{F}_{kn}^j$ and its representation in the domain of discrete spherical harmonics $\mathbf{F}_{lm}^j$ is summarized in the diagram:

\[
 \mathbf{F}_{nk}^j \mathrel{\mathop{\rightleftarrows}^{\mathrm{DSHT}}_{\mathrm{IDSHT}}}  \mathbf{F}_{lm}^j \,\,\,\, \,\,  :
\,\,\,\, \,\,  \left\lbrace \textbf{F}_{kn}^j \right\rbrace  \mathrel{\mathop{\rightleftarrows}^{\case{1}{N}\cdot\mathrm{DFT}}_{N\cdot\mathrm{IDFT}}} \left\lbrace  {\textbf{ F}}_{km}^j\right\rbrace   \mathrel{\mathop{\rightleftarrows}^{\mathrm{IDLT}}_{\mathrm{DLT}}} \left\lbrace \textbf{F}_{lm}^j \right\rbrace, \]

\noindent
while the first block is given by the well known discrete Fourier pairs (DFT-IDFT), the second one is given by the associated Discrete Legendre Transform pairs (DLT-IDLT)

\begin{equation}\label{eq:DLT}
\textbf{F}_{km}^j = \mathrm{DLT}^{m} \left\lbrace  \textbf{F}_{lm}^j  \right\rbrace    =     \sum_{l=|m|}^{N}  \textbf{F}_{lm}^j q_l^m P_l^m(t_k),   
\end{equation}

\noindent
and

\begin{equation}\label{eq:IDLT}
 \textbf{F}_{lm}^j = \mathrm{IDLT}^{m} \left\lbrace \textbf{F}_{km}^j \right\rbrace  =    \sum_{k=1}^{N_\theta} \textbf{F}_{km}^j q_l^m P_l^m(t_k)w_k,
\end{equation}

\noindent
where $m=-N,...,N$, $ l=|m|,...,N$, $t_k=\cos \theta_k$, $w_k$ are the Gaussian quadrature coefficients and 
\begin{equation}
\label{eq:qlm}
q_l^m= (-1)^m \sqrt{\case{2l+1}{4\pi}\case{(l-m)!}{(l+m)!}}.
\end{equation}

%
%
%
%
%
%
%
%
%
%
%
%
%
%
%

An alternative implementation can be found in \cite{DH1994}. A preliminary version of the algorithm can be found in the code repository \cite{Cebeiro_Algorithm_for_Discrete_2020}.

\section*{References}
\bibliographystyle{iopart-num-long}
\bibliography{IEEEabrv.bib,bibliography_IP_march_2022.bib}
\end{document}